\def\C{\mathcal{C}}
\def\N{\mathbb{N}}
\def\R{\mathbb{R}}
\def\f{{\tilde{f}}}
\def\g{{\tilde{g}}}
\theoremstyle{plain}
\newtheorem{theorem}{Theorem}
\newtheorem{proposition}[theorem]{Proposition}
\theoremstyle{definition}
\newtheorem{remark}[theorem]{Remark}
\title{Building Cantor's Bijection}
\author{Samuel Nicolay \and Laurent Simons}
\address{
Universit\'e de Li\`ege\\
 Institut de Math\'ematique (B37)\\
 Grande Traverse, 12\\
 B-4000 Li\`ege (Sart-Tilman), Belgium}
\email{S.Nicolay@ulg.ac.be, L.Simons@ulg.ac.be}
\keywords{Cantor, proper decimal expansion, Schr\"oder-Bernstein theorem}
\subjclass[2010]{26A30,26A03}
\begin{document}

\begin{abstract}
Cantor's first idea to build a one-to-one mapping from the unit interval to the unit square did not work since, as pointed out by Dedekind, the so-obtained function is not surjective. Here, we start from this function and modify it (on a negligible set) in order to obtain the desired result: a one-to-one correspondance between the unit interval and the unit square.
\end{abstract}

\maketitle

\section{Introduction.}
At the end of the 19th century, Cantor spent a lot of his time on proving the existence of one-to-one mappings between sets. In particular, as borne out by the epistolary relation with Dedekind \cite{dugac,hinkis}, he was concerned about finding such a correspondance between the set of natural numbers and the set of positive real numbers. Even if, following Dedekind, this work was only of theoretical interest, Cantor showed in 1874 that there does not exist any bijection between the set of all natural numbers and the unit interval \cite{cantor:1874}. Such a result paved the way for the set theory.

Once this problem solved, Cantor addressed to Dedekind a question that can be resumed as follows: ``Can a surface (e.g.\ the unit square) be put into relation with a curve (e.g.\ the unit segment)?'' \cite{dugac,hinkis}. The lack of rigorous definition for the notion of dimension did not help to fully understand the problem. In 1877, Cantor proposed the following example, based on the (unique) proper decimal expansion of the real numbers. If $x$ and $y$ both belong to the unit segment $[0,1)$, let us suppose that one has
\[
 x = \sum_{k=1}^{+\infty} \frac{x_k}{10^k} = 0.x_1 x_2 \cdots
 \qquad\text{and}\qquad
 y = \sum_{k=1}^{+\infty} \frac{y_k}{10^k} = 0.y_1y_2\cdots,
\]
where such expansions are supposed to be proper (e.g.\ there does not exist $k_0$ for which $k>k_0$ implies $x_k=9$) and let $\C$ be the function defined as
\[
% f: [0,1)^2\to [0,1) \quad (x,y)\mapsto \sum_{k=1}^{+\infty} \frac{z_k}{10^k},
 \C: [0,1)^2\to [0,1) \quad (x,y)\mapsto \sum_{k=1}^{+\infty} \frac{x_k}{10^{2k-1}} + \sum_{k=1}^{+\infty} \frac{y_k}{10^{2k}} = 0.x_1y_1x_2y_2x_3y_3\cdots.
\]
Dedeking objected that such a function is not surjective, since a number of the form
\[
 z
 = \sum_{k=1}^l \frac{z_k}{10^k}+ \sum_{k=1}^{+\infty} \frac{9}{10^{l+2k-1}} +\sum_{k=1}^{+\infty} \frac{z_{l+2k}}{10^{l+2k}} 
 =0.z_1 z_2\cdots z_l 9 z_{l+2} 9 z_{l+4} 9 \cdots
\]
has no preimage under $\C$: if $l$ is even, there is no $x$ such that $\C (x,y)=z$ and if $l$ is odd, there is no $y$ such that $\C (x,y)=z$. Cantor overcame this problem by replacing the decimal expansion in $\C$ with the expansion in terms of continued fractions. His work was published in \cite{cantor:1877}, with a praragraph explaining why his first idea could not work (and omitting any reference to Dedekind). This paper helped to clarify the confusion between dimension and cardinality. Cantor's bijection defined using continued fractions has been studied in~\cite{NiSi}.

In this paper, we go back on Cantor's first idea to build a one-to-one mapping between the unit segment and the unit square. We start from the function $\C$ relying on the decimal expansion and use the Schr\"oder-Bernstein theorem to define the desired bijection. This theorem was first conjectured by Cantor and independently proved by Bernstein and Schr\"oder in 1896 (see \cite{bernstein,cantor:1932,schroeder}, let us also notice that other names, such as Dedekind, should be added to this list). In other words, Cantor's first idea could have led to the craved mapping, but he did not have such a result at the time he was working on the topic; it would be conjectured by himself a few years later in \cite{cantor:1887}. Before building the bijective function in Section~3, we recall the Schr\"oder-Bernstein theorem and give a classical proof that will be used in the sequel.

\section{A ``practical'' proof of Schr\"oder-Bernstein theorem.}

There exist several proofs of Schr\"oder-Bernstein theorem \cite{hinkis} (the most classical ones use Tarski's fixed point theorem, or follow the idea of Richard Dedekind \cite{dedekind} or Julius K{\"o}nig \cite{konig}). The advantage of the one we present below (which is well-known and inspired by ideas of~\cite{bernstein:1906,reichbach}) is that it explicitly shows how to build a bijection between two non-empty sets, starting from injections between these sets.

\begin{theorem}[Schr\"{o}der-Bernstein]\label{SB}
Let $A$ and $B$ be non-empty sets. If there exist an injection from $A$ to $B$ and an injection from $B$ to $A$, then there exists a bijection from $A$ to $B$.
\end{theorem}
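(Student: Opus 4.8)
The plan is to give the classical Bernstein-style "chain" argument, which constructs the bijection explicitly from the two given injections. Let $f\colon A\to B$ and $g\colon B\to A$ be the hypothesized injections. The idea is to partition $A$ according to the "ancestry" of each point: following $g^{-1}$, then $f^{-1}$, then $g^{-1}$, and so on, as far as possible. For $a\in A$, trace back the sequence $a$, $g^{-1}(a)$, $f^{-1}(g^{-1}(a))$, $\dots$; this process either runs forever, or terminates because we reach an element of $A\setminus g(B)$, or terminates because we reach an element of $B\setminus f(A)$.

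First I would make this precise by defining, for each $a\in A$, its (possibly finite) backward orbit, and classifying $a$ into one of three sets: $A_A$, the elements whose backward orbit terminates in $A\setminus g(B)$ (or more precisely, whose "origin" lies in $A$); $A_B$, those whose backward orbit terminates in $B\setminus f(A)$; and $A_\infty$, those with an infinite backward orbit. These three sets are pairwise disjoint and cover $A$. I would then define the bijection $h\colon A\to B$ by setting $h(a)=f(a)$ for $a\in A_A\cup A_\infty$, and $h(a)=g^{-1}(a)$ for $a\in A_B$. Note that on $A_B$ the expression $g^{-1}(a)$ makes sense: every element of $A_B$ has nonempty backward orbit, so in particular lies in $g(B)$, and $g$ is injective so $g^{-1}(a)$ is well-defined.

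The verification then has two parts. For \textbf{injectivity}: $f$ restricted to $A_A\cup A_\infty$ is injective since $f$ is; $g^{-1}$ restricted to $A_B$ is injective since $g$ is; and the two images are disjoint — a point $f(a)$ with $a\in A_A\cup A_\infty$ cannot equal a point $g^{-1}(a')$ with $a'\in A_B$, because otherwise $a'=g(f(a))$ would inherit the ancestry of $a$ (its origin would be in $A$ or it would be infinite), contradicting $a'\in A_B$. For \textbf{surjectivity}: take any $b\in B$. If $g(b)\in A_B$, then $h(g(b))=g^{-1}(g(b))=b$. Otherwise $g(b)\in A_A\cup A_\infty$, and I claim $b\in f(A)$ with its preimage $a=f^{-1}(b)$ lying in $A_A\cup A_\infty$: indeed, if $g(b)\notin A_B$ then its backward orbit does not terminate in $B\setminus f(A)$, so in particular the element $b$ appearing one step back must lie in $f(A)$, and the origin of $g(b)$ equals the origin of its ancestor $a=f^{-1}(b)$, placing $a$ in $A_A$ or $A_\infty$; hence $h(a)=f(a)=b$.

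The main obstacle is purely bookkeeping: making the notion of "backward orbit" and "origin" rigorous without circular definitions, since the orbit alternates between $A$ and $B$ and one must track parity carefully. The cleanest way I would do this is to define, for $a \in A$, a finite or infinite sequence $(c_0, c_1, c_2, \dots)$ with $c_0 = a$, where $c_{2k+1}$ is the unique preimage of $c_{2k}$ under $g$ if $c_{2k}\in g(B)$ (else the sequence stops), and $c_{2k+2}$ is the unique preimage of $c_{2k+1}$ under $f$ if $c_{2k+1}\in f(A)$ (else the sequence stops). The classification is then: infinite sequence $\to A_\infty$; sequence stops at an even index (stuck in $A$) $\to A_A$; sequence stops at an odd index (stuck in $B$) $\to A_B$. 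Once this is set up, all the claims above reduce to one-step computations comparing the sequence of a point to the sequence of its image or preimage, and nonemptiness of $A$ and $B$ is never actually needed beyond being stated (though it is harmless to note $A=\emptyset$ would force $B=\emptyset$ as well). I expect no conceptual difficulty beyond this indexing care.
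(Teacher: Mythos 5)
Your proof is correct, but it is a genuinely different classical argument from the one in the paper. You use the K\"onig-style ``ancestry'' decomposition: each $a\in A$ is classified by whether its backward orbit under $g^{-1},f^{-1},g^{-1},\dots$ terminates in $A\setminus g(B)$ (your $A_A$), terminates in $B\setminus f(A)$ (your $A_B$), or never terminates (your $A_\infty$), and you map $A_A\cup A_\infty$ by $f$ and $A_B$ by $g^{-1}$. The paper instead uses the Bernstein iteration: it sets $A_0=A\setminus g(B)$, $B_n=f(A_n)$, $A_n=g(B_{n-1})$, and defines $h$ to be $f$ on $\bigcup_{n\in\N}A_n$ and $g^{-1}$ on the complement. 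The two are related by $\bigcup_{n\in\N}A_n=A_A$ and $A\setminus\bigcup_{n\in\N}A_n=A_B\cup A_\infty$, so the resulting bijections actually differ on $A_\infty$ (you push it forward by $f$, the paper pulls it back by $g^{-1}$); both choices work since each maps $A_\infty$ bijectively onto the corresponding subset of $B$. Your three-set version makes injectivity and surjectivity into one-step orbit comparisons at the cost of the parity bookkeeping you acknowledge; the paper's two-set version needs only the recursively defined sequence $(A_n)$, which is precisely what gets computed explicitly for Cantor's function in Section~3 (the sets $A_n$ and $B_n$ are written down concretely there), so the paper's formulation is the one tailored to the application. Your remark that non-emptiness of $A$ and $B$ is never really needed is also accurate; the paper only invokes it to dispose of degenerate cases such as $A_0=\emptyset$ separately.
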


\begin{proof}
Let $f$ be an injection from $A$ to $B$ and $g$ be ab injection from $B$ to $A$. We define the sequences $(A_n)_{n\in\N}$ of subsets of $A$ and $(B_n)_{n\in\N}$ of subsets of $B$ as follows:
\begin{equation}\label{suites}
\left\{
\begin{tabular}{ll}
$A_0 = A\setminus g(B)$\\
$B_n = f(A_n)$, & for $n\in\N$\\
$A_n = g(B_{n-1})$, & for $n\in\N\setminus\{0\}$
\end{tabular}
\right..
\end{equation}
If $A_0=\emptyset$, then $g(B)=A$ and thus $g$ is surjective. The application $g^{-1}$ is then a bijection from $A$ to $B$. Therefore, we can assume that $A_0$ is not the empty set.

Since $f$ and $g$ are injective, none of the elements of the sequences $(A_n)_{n\in\N}$ and $(B_n)_{n\in\N}$ are empty and thus
\[
\bigcup_{n\in\N}A_n\neq\emptyset,\quad\bigcup_{n\in\N}B_n\neq\emptyset\quad\text{and}\quad
f\left(\bigcup_{n\in\N}A_n\right)\neq\emptyset.
\]
Moreover, we have
\[
f\left(\bigcup_{n\in\N}A_n\right)\subseteq\bigcup_{n\in\N}B_n
\]
and the restriction $\f$ of $f$ to $\bigcup_{n\in\N}A_n$ is clearly a bijection from $\bigcup_{n\in\N}A_n$ to $\bigcup_{n\in\N}B_n$.

If $B=\bigcup_{n\in\N} B_n$, then $A=\bigcup_{n\in\N}A_n$ because $f$ is injective and thus $\f$ is a bijection from $A$ to $B$.

Let us now assume that $B\setminus\bigcup_{n\in\N}B_n\neq\emptyset$. Since $g$ is injective, we have
\[
 g\left(B\setminus\bigcup_{n\in\N}B_n\right)\subseteq A\setminus\bigcup_{n\in\N}A_n
\]
and $A\setminus\bigcup_{n\in\N}A_n\neq\emptyset$. Let us denote $\g$ the restriction of $g$ to $B\setminus\bigcup_{n\in\N}B_n$ and show that $\g$ is a bijection from $B\setminus\bigcup_{n\in\N}B_n$ to $A\setminus\bigcup_{n\in\N}A_n$. It is clear that $\g$ is injective. Since
\[
A\setminus\bigcup_{n\in\N}A_n=g(B)\cap\left(\bigcap_{n\in\N_0}(A\setminus g(B_{n-1}))\right)
=g(B)\cap\left(A\setminus g\left(\bigcup_{n\in\N}B_n\right)\right),
\]
$\g$ is also surjective. 

It only remains to put the pieces together in order to build a bijection from $A$ to $B$. Since $\f$ is a bijection from $\bigcup_{n\in\N}A_n$ to $\bigcup_{n\in\N}B_n$ and $\g^{-1}$ is a bijection from $A\setminus\bigcup_{n\in\N}A_n$ to $B\setminus\bigcup_{n\in\N}B_n$, the application $h$ defined by
\[
h(a)=\left\{
\begin{tabular}{ll}
$\f (a)$ & if $a\in \displaystyle\bigcup_{n\in\N}A_n$ \\
$\g^{-1}(a)$ & if $a\in A\displaystyle\setminus\bigcup_{n\in\N}A_n$
\end{tabular}
\right.
\]
is a bijection from $A$ to $B$, hence the conclusion.
\end{proof}

\begin{remark}
Let us note that the definition of the function $h$ given above is non-constructive \cite{troelstra}: there is no general method to decide whether or not an element of $A$ belongs to $\bigcup_{n\in \N} A_n$ in a finite number of steps. However, in the specific case we will consider, the problem becomes simpler.
\end{remark}

\section{A bijection between the unit square and the unit segment based on the decimal expansion.}

Let us build a bijection between the unit square $[0,1]^2$ and the unit segment $[0,1]$ starting from the function $\mathcal{C}$. Since the construction is entirely based on the proof of the previous theorem, we will use the notations of this proof.
\medskip

Let us set $A=[0,1]^2$, $B=[0,1]$ and let $f$ be the function defined by
\[
f(x,y)=\left\{\begin{tabular}{ll}
$\displaystyle  \sum_{k=1}^{+\infty}\frac{x_k}{10^{2k-1}}+\sum_{k=1}^{+\infty}\frac{y_k}{10^{2k}}=0.x_1y_1x_2y_2x_3y_3\cdots$ & if $(x,y)\in[0,1)^2$ \\
$\displaystyle  \sum_{k=1}^{+\infty}\frac{9}{10^{2k-1}}+\sum_{k=1}^{+\infty}\frac{y_k}{10^{2k}}=0.9y_19y_29y_3\cdots$ & if $(x,y)\in\{1\}\times [0,1)$ \\
$\displaystyle  \sum_{k=1}^{+\infty}\frac{x_k}{10^{2k-1}}+\sum_{k=1}^{+\infty}\frac{9}{10^{2k}}=0.x_19x_29x_3\cdots$ & if $(x,y)\in[0,1)\times\{1\}$ \\
$1$ & if $(x,y)=(1,1)$
\end{tabular}\right.
\]
where $(x_k)_{k\in\N\setminus\{0\}}$ and $(y_k)_{k\in\N\setminus\{0\}}$ are the proper decimal expansions of the real numbers $x$ and $y$ of $[0,1)$. In fact, we have $f(x,y)=\mathcal{C}(x,y)$ for $(x,y)\in [0,1)^2$, so that $f$ is simply an extension of $\C$ to $[0,1]^2$. Let $g$ be the function defined by $g(t)=(t,0)$ for $t\in B$. It easy to check that both $f$ and $g$ are injective.

Let us construct the sequences $(A_n)_{n\in\N}$ and $(B_n)_{n\in\N}$ step by step as in \eqref{suites}. For $n=0$, we have
\[
A_0=A\setminus g(B)=[0,1]\times (0,1]
\]
and
\[
B_0=f(A_0)=\{1\}\cup\{t\in[0,1):t_{2k}\neq 0\;\text{for some $k\in\N\setminus\{0\}$}\},
\]
where $(t_k)_{k\in\N\setminus\{0\}}$ is the proper decimal expansion of the real number $t$ belonging to $[0,1)$.

For $n=1$, we directly have $A_1=g(B_0)=B_0\times\{0\}$. In order to construct $B_1=f(A_1)$, let us take $(x,0)\in A_1$. We have $x_{2k}\neq0$ for some $k\in\N\setminus\{0\}$ by definition of $B_0$ and thus
\[
f(x,0)=\left\{\begin{tabular}{ll}
$\displaystyle\sum_{k=1}^{+\infty}\frac{9}{10^{2k-1}}=0.909090\cdots$ & if $x=1$ \\
$\displaystyle\sum_{k=1}^{+\infty}\frac{x_k}{10^{2k-1}}=0.x_10x_20x_30\cdots$ & if $x\neq 1$
\end{tabular}\right..
\]
We can then write
\[
f(x,0)=\sum_{k=1}^{+\infty}\frac{s_k}{10^{2k-1}}=0.s_10s_20s_30\cdots
\]
where $(s_k)_{k\in\N\setminus\{0\}}$ is a sequence satisfying only one of the two following conditions: 
\begin{itemize}
\item $s_k=9$ for all $k\in\N\setminus\{0\}$,
\item $(s_k)_{k\in\N\setminus\{0\}}$ is the proper decimal expansion of a real number of $[0,1)$ and $s_{2k}\neq0$ for some $k\in\N\setminus\{0\}$.
\end{itemize}
We will denote by $S$ the set of sequences which satisfy one of the two previous conditions. 
We therefore have
\[
B_1=\left\{t\in[0,1):t=\sum_{k=1}^{+\infty}\frac{s_k}{10^{2k-1}}\;\text{with $(s_k)_{k\in\N\setminus\{0\}}\in S$}\right\}.
\]

For $n=2$, the argument is similar. We have $A_2=g(B_1)=B_1\times\{0\}$. If $(x,0)\in A_2$, then $x_{2k}=0$ for all $k\in\N\setminus\{0\}$ and $x_{4k-1}\neq 0$ for some $k\in\N\setminus\{0\}$. Consequently, we have
\[
f(x,0)=\sum_{k=1}^{+\infty}\frac{x_{2k-1}}{10^{4k-3}}=0.x_1000x_3000x_5000\cdots
\]
and so
\[
B_2=\left\{t\in[0,1):t=\sum_{k=1}^{+\infty}\frac{s_k}{10^{4k-3}}\;\text{with $(s_k)_{k\in\N\setminus\{0\}}\in S$}\right\}.
\]

By going on in this way, we obtain $A_n=B_{n-1}\times\{0\}$ and
\[
B_n=\left\{t\in[0,1):t=\sum_{k=1}^{+\infty}\frac{s_k}{10^{2^nk-(2^n-1)}}\;\text{with $(s_k)_{k\in\N\setminus\{0\}}\in S$}\right\},
\]
for all $n\in\N\setminus\{0\}$.

Since $A_0\neq\emptyset$, $B\setminus\bigcup_{n\in\N}B_n\neq\emptyset$ (we have $0\notin B_n$ for any $n\in\N$) and $g^{-1}(x,y)=x$ for $(x,y)\in A\setminus\bigcup_{n\in\N}A_n$, we have proved the following proposition thanks to Theorem~\ref{SB}.

\begin{proposition}
The function $f^*$ defined by
\[
f^*(x,y)=\left\{\begin{tabular}{ll}
$f(x,y)$ & if $(x,y)\in\displaystyle\bigcup_{n\in\N} A_n$\\
$x$      & otherwise
\end{tabular}\right.
\]
is a bijection from $[0,1]^2$ to $[0,1]$.
\end{proposition}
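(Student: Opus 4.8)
The plan is to verify that the proposition is a direct instantiation of Theorem~\ref{SB} applied to the injections $f$ and $g$ constructed above, checking that each hypothesis of the theorem is met and that the piecewise formula for $f^*$ matches the bijection $h$ produced in the proof. First I would confirm that $A=[0,1]^2$ and $B=[0,1]$ are non-empty and that $f\colon A\to B$ and $g\colon B\to A$ are genuine injections: for $g(t)=(t,0)$ this is immediate, while for $f$ one checks case by case that the interleaving of the proper decimal expansions of $x$ and $y$ (with the special conventions for coordinates equal to $1$) recovers $(x,y)$ uniquely from the image, so that distinct pairs have distinct images. This puts us exactly in the setting of Theorem~\ref{SB}, so a bijection $h\colon A\to B$ exists and is given explicitly by the formula in the proof: $h(a)=\f(a)$ on $\bigcup_{n\in\N}A_n$ and $h(a)=\g^{-1}(a)$ on $A\setminus\bigcup_{n\in\N}A_n$.

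Next I would match $f^*$ to $h$ term by term. On $\bigcup_{n\in\N}A_n$ the map $\f$ is by definition the restriction of $f$, so $f^*(x,y)=f(x,y)=\f(x,y)=h(x,y)$ there. On the complement $A\setminus\bigcup_{n\in\N}A_n$, recall that $\g$ is the restriction of $g$ to $B\setminus\bigcup_{n\in\N}B_n$; since $g(t)=(t,0)$, its inverse sends $(x,y)\in A\setminus\bigcup_{n\in\N}A_n$ to its first coordinate $x$. Hence $h(x,y)=\g^{-1}(x,y)=x=f^*(x,y)$ on the complement as well. So $f^*=h$ on all of $A$, and the proposition follows once we know $h$ is well-defined, i.e.\ once we know the structure of $\bigcup_{n\in\N}A_n$ is as described.

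The one point requiring genuine (if routine) verification is the explicit computation of the sequences $(A_n)$ and $(B_n)$ via the recursion \eqref{suites}, together with the observations recorded just before the proposition: namely that $A_0=[0,1]\times(0,1]\neq\emptyset$, that $A_n=B_{n-1}\times\{0\}$ for $n\geq 1$, that $0\notin B_n$ for every $n$ so that $B\setminus\bigcup_{n\in\N}B_n\neq\emptyset$, and that $g^{-1}$ acts by projection on the relevant set. The first of these, $A_0\neq\emptyset$, is needed to rule out the trivial case $A_0=\emptyset$ in the proof of Theorem~\ref{SB}; the others ensure we are in the "last" branch of that proof, where $h$ is assembled from $\f$ and $\g^{-1}$ exactly as in the formula for $f^*$. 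The main obstacle, such as it is, is bookkeeping: one must track carefully how $f$ acts on points of the form $(x,0)$ — distinguishing $x=1$ from $x\neq 1$ and identifying which decimal digits are forced to vanish — to confirm the closed form $B_n=\{t\in[0,1):t=\sum_{k=1}^{+\infty}s_k/10^{2^nk-(2^n-1)},\ (s_k)_{k\in\N\setminus\{0\}}\in S\}$ and, in particular, that $0$ never lies in any $B_n$ because membership in $S$ forbids the all-zero sequence. Once these descriptions are in hand, no further argument is needed: the proposition is just Theorem~\ref{SB} with the bijection written out.
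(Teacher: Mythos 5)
Your proposal is correct and follows essentially the same route as the paper: the paper's ``proof'' of the proposition is precisely the preceding construction (checking that $f$ and $g$ are injections, computing the sequences $(A_n)$ and $(B_n)$ from \eqref{suites}, observing $A_0\neq\emptyset$ and $0\notin B_n$ so that the last branch of the proof of Theorem~\ref{SB} applies, and reading off $h$ as $f$ on $\bigcup_n A_n$ and as the first-coordinate projection elsewhere). Your identification of $f^*$ with the bijection $h$ and your list of the bookkeeping points to verify match the paper's argument.
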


\begin{remark}
As expected, we have $f^*=f$ almost everywhere on $[0,1]^2$ (with respect to the Lebesgue measure), since the set $[0,1]^2\setminus\bigcup_{n\in\N}A_n$ is included in the segment $[0,1]\times\{0\}$, which is a negligible set in~$\R^2$. Therefore, we have $\C=f^*$ almost everywhere.
\end{remark}

\bibliography{cantor}{}

\begin{thebibliography}{10}

\bibitem{bernstein}
Felix Bernstein.
\newblock {U}ntersuchungen aus der {M}engenlehre.
\newblock {\em Mathematische Annalen}, 61:117--155, 1905.

\bibitem{bernstein:1906}
Felix Bernstein.
\newblock Sur la th{\'e}orie des ensembles.
\newblock {\em Comptes rendus hebdomadaires des s{\'e}ances de l'Acad{\'e}mie
  des sciences}, 143:953--955, 1906.

\bibitem{cantor:1874}
Georg Cantor.
\newblock {\"U}ber eine {E}igenschaft des {I}nbegriffes aller reellen
  algebraischen {Z}ahlen.
\newblock {\em Crelle's Journal f{\"u}r Mathematik}, 77:258--263, 1874.

\bibitem{cantor:1877}
Georg Cantor.
\newblock {E}in {B}eitrag zur {M}annigfaltigkeitslehre.
\newblock {\em Journal f{\"u}r die reine und angewandte Mathematik},
  84:242--258, 1877.

\bibitem{cantor:1887}
Georg Cantor.
\newblock {M}itteilungen zur {L}ehre vom {T}ransfiniten.
\newblock {\em Zeitschrift f{\"u}r Philosophie und philosophische Kritik},
  91:81--125, 1887.

\bibitem{cantor:1932}
Georg Cantor, Ernst Zermelo, and Adolf Fraenkel.
\newblock {\em {G}esammelte {A}bhandlungen mathematischen und philosophischen
  {I}nhalts}.
\newblock Verlag von Julius Springer, 1932.

\bibitem{dedekind}
Richard Dedekind.
\newblock {\em {W}as sind und was sollen die {Z}ahlen?}
\newblock Springer, 1893.

\bibitem{dugac}
Pierre Dugac.
\newblock {\em {R}ichard {D}edekind et les fondements des math{\'e}matiques}.
\newblock Number~24. Vrin, 1976.

\bibitem{hinkis}
Arie Hinkis.
\newblock {\em Proofs of the {C}antor-{B}ernstein {T}heorem: a {M}athematical
  {E}xcursion}, volume~45 of {\em {S}cience {N}etworks, {H}istorical
  {S}tudies}.
\newblock Birkh{\"a}user, 2013.

\bibitem{konig}
Julius K{\"o}nig.
\newblock Sur la th{\'e}orie des ensembles.
\newblock {\em Comptes rendus hebdomadaires dess{\'e}ances de l'Acad{\'e}mie
  des sciences}, 143:110--112, 1906.

\bibitem{NiSi}
Samuel Nicolay and Laurent Simons.
\newblock {A}bout the {M}ultifractal {N}ature of {C}antor's {B}ijection.
\newblock {\em arXiv}, 1404.0386v1:1--9, 2014.

\bibitem{reichbach}
Marian Reichbach.
\newblock Une simple d{\'e}monstration du th{\'e}or{\'e}me de
  {C}antor-{B}ernstein.
\newblock {\em Colloquium Mathematicum}, 3:163, 1955.

\bibitem{schroeder}
Ernst Schr{\"o}der.
\newblock {\"U}ber zwei {D}efinitionen der {E}ndlichkeit und {G.}
  {C}antor{\rq}sche {S}{\"a}tze.
\newblock {\em Nova Acta Academiae Caesareae Leopoldino-Caroliae Germanicae
  Naturae Curiosum}, 362:303--376, 1898.

\bibitem{troelstra}
Anne~Sjerp Troelstra and Dirk van Dalen.
\newblock {\em Constructivism in Mathematics}.
\newblock Studies in Logic and the Foundations of Mathematics. North-Holland,
  1988.

\end{thebibliography}
\bibliographystyle{plain}

\end{document}